\newcommand{\C}{{\mathbb C}}
\renewcommand{\O}{{\mathscr O}}
\newcommand{\R}{{\mathbb R}}
\newtheorem{theorem}{\bf Theorem}
\newtheorem{lemma}{\bf Lemma}
\title{Applications of a parametric Oka principle for liftings}
\author{Finnur L\'arusson}
\address{School of Mathematical Sciences, University of Adelaide, Adelaide SA 5005, Australia.} 
\email{finnur.larusson@adelaide.edu.au}
\dedicatory{Dedicated to Linda P.\ Rothschild}
\subjclass[2000]{Primary 32Q28.  Secondary 18F20, 18G55, 32E10, 55U35.}
\keywords{Stein manifold, Oka principle, Oka property, convex interpolation property, sub\-elliptic, model structure, fibration.}
\date{28 January 2009.  Minor changes 4 May 2009}
\begin{document}

\begin{abstract}  
A parametric Oka principle for liftings, recently proved by Forstneri\v c, provides many examples of holomorphic maps that are fibrations in a model structure introduced in previous work of the author.  We use this to show that the basic Oka property is equivalent to the parametric Oka property for a large class of manifolds.  We introduce new versions of the basic and parametric Oka properties and show, for example, that a complex manifold $X$ has the basic Oka property if and only if every holomorphic map to $X$ from a contractible submanifold of $\C^n$ extends holomorphically to $\C^n$.
\end{abstract}

\maketitle

\section{Introduction}

\noindent
In this note, which is really an addendum to the author's papers \cite{Larusson1} and \cite{Larusson2}, we use a new parametric Oka principle for liftings, very recently proved by Franc Forstneri\v c \cite{Forstneric0}, to solve several problems left open in those papers.  To make this note self-contained would require a large amount of technical background.  Instead, we give a brief introduction and refer to the papers \cite{Forstneric1}, \cite{Forstneric2}, \cite{Larusson1}, \cite{Larusson2} for more details.  

The modern theory of the Oka principle began with Gromov's seminal paper \cite{Gromov} of 1989.  Since then, researchers in Oka theory have studied more than a dozen so-called Oka properties that a complex manifold $X$ may or may not have.  These properties concern the task of deforming a continuous map $f$ from a Stein manifold $S$ to $X$ into a holomorphic map.  If this can always be done so that under the deformation $f$ is kept fixed on a closed complex submanifold $T$ of $S$ on which $f$ is holomorphic, then $X$ is said to have the basic Oka property with interpolation (BOPI).  Equivalently (this is not obvious), $T$ may be taken to be a closed analytic subvariety of a reduced Stein space $S$.  If $f$ can always be deformed to a holomorphic map so that the deformed maps stay arbitrarily close to $f$ on a holomorphically convex compact subset $K$ of $S$ on which $f$ is holomorphic and are holomorphic on a common neighbourhood of $K$, then $X$ is said to have the basic Oka property with approximation (BOPA).  If every holomorphic map to $X$ from a compact convex subset $K$ of $\C^n$ can be approximated uniformly on $K$ by entire maps $\C^n\to X$, then $X$ is said to have the convex approximation property (CAP), introduced in \cite{Forstneric2}.  These properties all have parametric versions (POPI, POPA, PCAP) where instead of a single map $f$ we have a family of maps depending continuously on a parameter.  

Some of the basic and parametric Oka properties have been extended from complex manifolds to holomorphic maps (viewing a manifold as a constant map from itself).  For example, a holomorphic map $f:X\to Y$ is said to satisfy POPI if for every Stein inclusion $T\hookrightarrow S$ (a Stein manifold $S$ with a submanifold $T$), every finite polyhedron $P$ with a subpolyhedron $Q$, and every continuous map $g:S\times P\to X$ such that the restriction $g|S\times Q$ is holomorphic along $S$, the restriction $g|T\times P$ is holomorphic along $T$, and the composition $f\circ g$ is holomorphic along $S$, there is a continuous map $G:S\times P\times I\to X$, where $I=[0,1]$, such that:
\begin{enumerate}
\item  $G(\cdot,\cdot,0)=g$,
\item  $G(\cdot,\cdot,1):S\times P\to X$ is holomorphic along $S$,
\item  $G(\cdot,\cdot,t)=g$ on $S\times Q$ and on $T\times P$ for all $t\in I$,
\item  $f\circ G(\cdot,\cdot,t)=f\circ g$ on $S\times P$ for all $t\in I$.
\end{enumerate}
Equivalently, $Q\hookrightarrow P$ may be taken to be any cofibration between cofibrant topological spaces, such as the inclusion of a subcomplex in a CW-complex, and the existence of $G$ can be replaced by the stronger statement that the inclusion into the space, with the compact-open topology, of continuous maps $h:S\times P\to X$ with $h=g$ on $S\times Q$ and on $T\times P$ and $f\circ h=f\circ g$ on $S\times P$ of the subspace of maps that are holomorphic along $S$ is acyclic, that is, a weak homotopy equivalence (see \cite{Larusson1}, \S 16).  (Here, the notion of cofibrancy for topological spaces and continuous maps is the stronger one that goes with Serre fibrations rather than Hurewicz fibrations.  We remind the reader that a Serre fibration between smooth manifolds is a Hurewicz fibration, so we will simply call such a map a topological fibration.)

In \cite{Larusson1}, the category of complex manifolds was embedded into the category of prestacks on a certain simplicial site with a certain simplicial model structure such that all Stein inclusions are cofibrations, and a holomorphic map is acyclic if and only if it is topologically acyclic, and is a fibration if and only if it is a topological fibration and satisfies POPI.  It was known then that complex manifolds with the geometric property of subellipticity satisfy POPI, but very few examples of nonconstant holomorphic maps satisfying POPI were known, leaving some doubt as to whether the model structure constructed in \cite{Larusson1} is an appropriate homotopy-theoretic framework for the Oka principle.  This doubt is dispelled by Forstneri\v c's parametric Oka principle for liftings.

Forstneri\v c has proved that the basic Oka properties for manifolds are equivalent, and that the parametric Oka properties for manifolds are equivalent (\cite{Forstneric1}, see also \cite{Larusson2}, Theorem 1; this is not yet known for maps), so we can refer to them as the basic Oka property and the parametric Oka property, respectively.  For Stein manifolds, the basic Oka property is equivalent to the parametric Oka property  (\cite{Larusson2}, Theorem 2).  In \cite{Larusson2} (the comment following Theorem 5), it was noted that the equivalence of all the Oka properties could be extended to a much larger class of manifolds, including for example all quasi-projective manifolds, if we had enough examples of holomorphic maps satisfying POPI.  This idea is carried out below.  It remains an open problem whether the basic Oka property is equivalent to the parametric Oka property for all manifolds.

We conclude by introducing a new Oka property that we call the convex interpolation property, with a basic version equivalent to the basic Oka property and a parametric version equivalent to the parametric Oka property.  In particular, we show that a complex manifold $X$ has the basic Oka property if and only if every holomorphic map to $X$ from a contractible submanifold of $\C^n$ extends holomorphically to $\C^n$.  This is based on the proof of Theorem 1 in \cite{Larusson2}.

{\it Acknowledgement.}  I am grateful to Franc Forstneri\v c for helpful discussions.

\section{The parametric Oka principle for liftings}

\noindent
Using the above definition of POPI for holomorphic maps, we can state Forstneri\v c's parametric Oka principle for liftings, in somewhat less than its full strength, as follows.

\begin{theorem}[Parametric Oka principle for liftings \cite{Forstneric0}]  Let $X$ and $Y$ be complex manifolds and $f:X\to Y$ be a holomorphic map which is either a subelliptic submersion or a holomorphic fibre bundle whose fibre has the parametric Oka property.  Then $f$ has the parametric Oka property with interpolation.
\label{POPL}
\end{theorem}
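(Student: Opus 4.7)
The plan is to adapt Forstneri\v c's proof of the parametric Oka property for subelliptic submersions, carrying along the fibrewise constraint $f\circ G=f\circ g$ and the interpolation datum on $T$ as additional bookkeeping. First I would reformulate the problem in terms of sections: form the pullback $Z=(f\circ g)^{*}X\to S\times P$ of $f:X\to Y$ along $f\circ g$. Then $g$ tautologically lifts to a continuous section of $Z$, and the conclusion we want is equivalent to deforming that section, through continuous sections that are constant on $(S\times Q)\cup(T\times P)$, to one holomorphic along $S$. Because $f\circ g$ is holomorphic along $S$, $Z$ inherits the structure of a holomorphic submersion along $S$; under the two hypotheses of the theorem, $Z$ is either fibrewise subelliptic or a holomorphic fibre bundle along $S$ whose fibre has POP.

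Second, exhaust $S$ by sublevel sets $S_j=\{\rho\le c_j\}$ of a strictly plurisubharmonic exhaustion $\rho$ adapted to the submanifold $T$, and build the deformation by the standard Oka-theoretic induction. In the noncritical step one enlarges the current sublevel set without crossing a critical point of $\rho$ and applies a parametric Cartan-type splitting lemma for sections of $Z$ over a Cartan pair, combined with Oka--Weil approximation. In the critical step one attaches a handle and reduces to a noncritical step on a bump neighbourhood via a holomorphic change of coordinates. At each stage the homotopy is required to be constant on $(S\times Q)\cup(T\times P)$, to satisfy $f\circ h_t=f\circ g$, and to end at a section holomorphic along $S$ on the current sublevel.

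Third, the technical heart is the local parametric splitting/approximation lemma for sections of $Z$. In the fibre bundle case one trivializes $Z$ locally so that sections become maps with values in the Oka fibre $F$; the parametric Oka property of $F$ then supplies the required parametric approximation with interpolation on $T$, the latter arranged by multiplying the correction by a holomorphic function that vanishes on $T$ and is cut off to the Cartan piece. In the subelliptic submersion case one invokes a dominating family of fibre sprays for $f$, pulled back to $Z$, to write local corrections as values of sprays depending holomorphically on the parameter in $P$; a parametric Cousin-I problem then yields the splitting, and again a factor vanishing on $T$ secures interpolation on $T$. Preservation of $f\circ g$ is automatic because all corrections are vertical with respect to $f$, i.e.\ live in the fibres of $Z\to S\times P$.

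Iterating the induction with a geometrically decreasing approximation tolerance produces a sequence of sections converging uniformly on compacts to a section holomorphic along $S$ on all of $S$; concatenating the inductive homotopies (and reparametrizing $I$ in a standard way) yields the desired $G$. I expect the main obstacle to be the parametric splitting lemma in Step 3: the simultaneous demands of a parameter space $P$ with subpolyhedron $Q$, interpolation on $T$, and preservation of $f\circ g$ force every correction to be vertical along $f$, so ordinary Cartan gluing in $X$ is useless. Genuinely fibrewise tools -- vertical sprays in the subelliptic case, the parametric Oka property of the fibre in the bundle case -- are indispensable, and are exactly what the two alternative hypotheses supply.
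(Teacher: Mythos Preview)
The paper does not prove this theorem at all: it is stated as Forstneri\v c's result and cited from \cite{Forstneric0}, with no argument given. There is therefore no ``paper's own proof'' to compare your proposal against.

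That said, your outline is a reasonable high-level sketch of the strategy in \cite{Forstneric0}: pass to the pullback so the problem becomes one about sections, exhaust $S$ by sublevel sets of a plurisubharmonic exhaustion, and run the usual noncritical/critical induction driven by a parametric splitting lemma, with vertical corrections ensuring $f\circ G=f\circ g$ and a factor vanishing on $T$ securing interpolation. As a proof it remains a sketch---the hard analysis is in the parametric splitting and approximation lemmas you allude to, and those require substantial work (control of the spray domains, uniform estimates in the parameter, handling of the $Q\hookrightarrow P$ constraint)---but nothing in your plan is wrong, and it matches the architecture of Forstneri\v c's argument. For the purposes of this paper, however, the correct move is simply to cite \cite{Forstneric0}.
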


The notion of a holomorphic submersion being subelliptic was introduced by Forstneri\v c \cite{Forstneric3}, generalising the concept of ellipticity due to Gromov \cite{Gromov}.  Subellipticity is the weakest currently-known sufficient geometric condition for a holomorphic map to satisfy POPI.

By a corollary of the main result of \cite{Larusson1}, Corollary 20, a holomorphic map $f$ is a fibration in the so-called intermediate model structure constructed in \cite{Larusson1} if and only if $f$ is a topological fibration and satisfies POPI (and then $f$ is a submersion).  In particular, considering the case when $f$ is constant, a complex manifold is fibrant if and only if it has the parametric Oka property.  The following result is therefore immediate.

\begin{theorem}  {\rm (1)}  A subelliptic submersion is an intermediate fibration if and only if it is a topological fibration. 
 
{\rm (2)}  A holomorphic fibre bundle is an intermediate fibration if and only if its fibre has the parametric Oka property.
\label{conj-solved}
\end{theorem}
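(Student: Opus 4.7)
My plan is to reduce both statements directly to Theorem \ref{POPL} combined with the characterization of intermediate fibrations recalled just above the statement: a holomorphic map is an intermediate fibration if and only if it is a topological fibration and satisfies POPI (in which case it is automatically a submersion).

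For part (1), I expect one direction to be automatic, since an intermediate fibration is by that characterization a topological fibration. For the other direction, I would apply Theorem \ref{POPL} to produce POPI for a subelliptic submersion; adding the topological fibration hypothesis then lets the characterization close the argument.

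Part (2) should split analogously. If $f:X\to Y$ is a holomorphic fibre bundle whose fibre has the parametric Oka property, Theorem \ref{POPL} delivers POPI for $f$, and a holomorphic fibre bundle is automatically a topological fibration, so the characterization applies. For the converse, assuming such an $f$ is an intermediate fibration, I would pick any $y\in Y$ and identify the fibre $F=f^{-1}(y)$ with the pullback of $f$ along $\{y\}\hookrightarrow Y$. Because fibrations in any model category are preserved under pullback, $F\to\{y\}$ is an intermediate fibration, i.e.\ $F$ is fibrant in the intermediate model structure. The observation singled out before the statement (the case of a constant map) identifies fibrancy of $F$ with $F$ having the parametric Oka property, and we are done.

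The only step requiring any care, and the likely main obstacle, is the pullback argument in the converse half of (2): one has to know that taking the fibre of a holomorphic fibre bundle over a point really is a pullback in the prestack category of \cite{Larusson1}, so that the formal model-categorical stability of fibrations under pullback can be invoked. This is a routine consequence of how complex manifolds are embedded into that category, but it is the one spot in the proof where we leave pure invocation of Theorem \ref{POPL} behind.
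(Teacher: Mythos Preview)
Your proposal is correct and matches the paper's own argument essentially verbatim: the paper declares the theorem ``immediate'' from Theorem~\ref{POPL} together with the characterization of intermediate fibrations, and for the only-if direction of (2) it invokes exactly the pullback-of-a-fibration argument you describe. Your cautionary remark about the pullback being computed in the prestack category is reasonable but, as you say, routine; the paper does not comment on it.
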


Part (1) is a positive solution to Conjecture 21 in \cite{Larusson1}.  For the only-if direction of (2), we simply take the pullback of the bundle by a map from a point into the base of the bundle and use the fact that in any model category, a pullback of a fibration is a fibration.  As remarked in \cite{Larusson1}, (1) may be viewed as a new manifestation of the Oka principle, saying that for holomorphic maps satisfying the geometric condition of subellipticity, there is only a topological obstruction to being a fibration in the holomorphic sense defined by the model structure in \cite{Larusson1}.  Theorem \ref{conj-solved} provides an ample supply of intermediate fibrations.

A result similar to our next theorem appears in \cite{Forstneric0}.  The analogous result for the basic Oka property is Theorem 3 in \cite{Larusson2}.

\begin{theorem}  Let $X$ and $Y$ be complex manifolds and $f:X\to Y$ be a holomorphic map which is an intermediate fibration.  

{\rm (1)}  If $Y$ satisfies the parametric Oka property, then so does $X$.  

{\rm (2)}  If $f$ is acyclic and $X$ satisfies the parametric Oka property, then so does $Y$.
\label{up-and-down}
\end{theorem}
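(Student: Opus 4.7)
The plan is to interpret both parts within the intermediate model structure of \cite{Larusson1} via the characterizations recalled in the excerpt: a complex manifold satisfies the parametric Oka property if and only if it is fibrant, and an intermediate fibration is precisely a fibration in that model structure. Both statements then reduce to standard closure properties.

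For (1), I factor the structure map $X \to *$ as $X \xrightarrow{f} Y \to *$. The first factor is a fibration by hypothesis, and the second is a fibration because $Y$, satisfying POP, is fibrant. Since the composition of fibrations is a fibration in any model category, $X \to *$ is a fibration, so $X$ is fibrant and therefore satisfies POP.

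For (2), the map $f$ is a trivial fibration and therefore has the right lifting property with respect to every cofibration. Every object of the intermediate model structure is cofibrant, so lifting $\emptyset \to Y$ against $f$ yields a section $s : Y \to X$ with $f \circ s = \mathrm{id}_Y$. Together with $f$, this section exhibits $Y \to *$ as a retract of $X \to *$ in the arrow category of the model category. Since $X$ is fibrant and fibrations are stable under retracts, $Y \to *$ is a fibration, so $Y$ is fibrant and satisfies POP.

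The point I expect to require the most care is the cofibrancy step invoked in (2); should it prove delicate to cite cleanly from \cite{Larusson1}, there is a direct geometric alternative. Namely, given continuous data defining POP for $Y$, one first lifts it to a continuous family in $X$ using that $f$ is a topologically acyclic topological fibration, then uses POPI of $f$ to adjust the lift so that it is holomorphic along $S$ on the distinguished subfamily, next applies POP of $X$ to deform to a fully holomorphic family, and finally pushes the result forward by $f$ to obtain the required deformation in $Y$.
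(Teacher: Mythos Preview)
Your argument for (1) is exactly the paper's: the author simply says that in any model category the source of a fibration with fibrant target is fibrant.

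For (2) your primary route is genuinely different from the paper's. The paper does not argue abstractly via a section and retract; instead it verifies POPI for $Y$ directly by constructing, for given data $h:S\times P\to Y$, a lift $k:S\times P\to X$ that is already holomorphic along $S$ on $S\times Q$ and along $T$ on $T\times P$, then applies the parametric Oka property of $X$ to $k$, and finally composes with $f$. The lift $k$ is built in stages: lift $h$ on $T\times Q$ and use POPI of $f$ to make it holomorphic along $T$; extend continuously over $S\times Q$ and over $T\times P$ using the topological cofibrations and again use POPI of $f$ on each piece; finally extend continuously over $S\times P$. Your retract argument, if it goes through, is cleaner and more conceptual; what it buys is that the analytic bookkeeping disappears entirely. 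The point you flag is exactly the right one: you need $Y$ (as a representable prestack) to be cofibrant in the intermediate structure of \cite{Larusson1}, and your blanket claim that \emph{every} object is cofibrant is stronger than you need and may well be false in that setting---you should check and cite the precise statement for representables.

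One caution about your fallback sketch: as written it does not quite work. After lifting $h$ continuously to $k$, you cannot apply POPI of $f$ to all of $k$, because POPI requires $f\circ k=h$ to be holomorphic along $S$ on $S\times P$, which is precisely what you do not yet have. This is why the paper builds the holomorphic part of the lift piecewise on $T\times Q$, then $S\times Q$ and $T\times P$ (where $h$ \emph{is} holomorphic along the relevant factor), and only then extends continuously to $S\times P$ before invoking the parametric Oka property of $X$ rather than POPI of $f$.
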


\begin{proof}  (1)  This follows immediately from the fact that if the target of a fibration in a model category is fibrant, so is the source.

(2)   Let $T\hookrightarrow S$ be a Stein inclusion and $Q\hookrightarrow P$ an inclusion of parameter spaces.  Let $h:S\times P\to Y$ be a continuous map such that the restriction $h|S\times Q$ is holomorphic along $S$ and the restriction $h|T\times P$ is holomorphic along $T$.  We need a lifting $k$ of $h$ by $f$ with the same properties.  Then the parametric Oka property of $X$ allows us to deform $k$ to a continuous map $S\times P\to X$ which is holomorphic along $S$, keeping the restrictions to $S\times Q$ and $T\times P$ fixed.  Finally, we compose this deformation by $f$.

To obtain the lifting $k$, we first note that since $f$ is an acyclic topological fibration, $h|T\times Q$ has a continuous lifting, which, since $f$ satisfies POPI, may be deformed to a lifting which is holomorphic along $T$.  We use the topological cofibration $T\times Q\hookrightarrow S\times Q$ to extend this lifting to a continuous lifting $S\times Q\to X$, which may be deformed to a lifting which is holomorphic along $S$, keeping the restriction to $T\times Q$ fixed.  We do the same with $S\times Q$ replaced by $T\times P$ and get a continuous lifting of $h$ restricted to $(S\times Q)\cup(T\times P)$ which is holomorphic along $S$ on $S\times Q$ and along $T$ on $T\times P$.  Finally, we obtain $k$ as a continuous extension of this lifting, using the topological cofibration $(S\times Q)\cup (T\times P)\hookrightarrow S\times P$.
\end{proof}

\section{Equivalence of the basic and the parametric Oka properties}

\noindent
Following \cite{Larusson2}, by a {\it good}\, map we mean a holomorphic map which is an acyclic intermediate fibration, that is, a topological acyclic fibration satisfying POPI.  We call a complex manifold $X$ {\it good}\, if it is the target, and hence the image, of a good map from a Stein manifold.  This map is then weakly universal in the sense that every holomorphic map from a Stein manifold to $X$ factors through it.

A Stein manifold is obviously good.  As noted in \cite{Larusson2}, the class of good manifolds is closed under taking submanifolds, products, covering spaces, finite branched covering spaces, and complements of analytic hypersurfaces.  This does not take us beyond the class of Stein manifolds.  However, complex projective space, and therefore every quasi-projective manifold, carries a holomorphic affine bundle whose total space is Stein (in algebraic geometry this observation is called the Jouanolou trick), and by Theorem \ref{conj-solved}, the bundle map is good.  Therefore all quasi-projective manifolds are good.  (A quasi-projective manifold is a complex manifold of the form $Y\setminus Z$, where $Y$ is a projective variety and $Z$ is a subvariety.  We need the fact, proved using blow-ups, that $Y$ can be taken to be smooth and $Z$ to be a hypersurface.)  The class of good manifolds thus appears to be quite large, but we do not know whether every manifold, or even every domain in $\C^n$, is good.

\begin{theorem}   A good manifold has the basic Oka property if and only if it has the parametric Oka property.
\label{equivalence}
\end{theorem}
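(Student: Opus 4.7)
The parametric Oka property trivially implies the basic Oka property, by taking the parameter spaces $Q=P$ in the definition of POPI to be a point. For the converse, suppose $X$ is good and has BOP. The plan is to transfer the Oka property through a good map $f:S\to X$ with $S$ Stein. The overall strategy has three stages: (i) show $S$ itself has the basic Oka property; (ii) invoke Theorem 2 of \cite{Larusson2}, using that $S$ is Stein, to conclude $S$ has the parametric Oka property; (iii) invoke Theorem \ref{up-and-down}(2), using that $f$ is an acyclic intermediate fibration, to transfer the parametric Oka property from $S$ up to $X$.

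The substance lies in step (i), and by the equivalence of the basic Oka properties for manifolds it suffices to verify BOPI for $S$. Given a Stein inclusion $T_0\hookrightarrow T$ and a continuous map $\varphi:T\to S$ with $\varphi|T_0$ holomorphic, I would proceed in three moves. First, push the problem down to $X$: since $X$ has BOPI, there is a continuous homotopy $H:T\times I\to X$ from $f\circ\varphi$ to some holomorphic $\psi:T\to X$, stationary on $T_0$. Second, lift the endpoint to a holomorphic map: using that $f$ is an acyclic topological fibration, choose a continuous lift $\tilde\eta:T\to S$ of $\psi$ that extends $\varphi|T_0$, and then apply POPI of $f$ with trivial parameter polyhedron ($P$ a point, $Q$ empty) to deform $\tilde\eta$ rel $T_0$, keeping $f\circ\tilde\eta=\psi$, to a holomorphic map $\eta:T\to S$ with $\eta|T_0=\varphi|T_0$ and $f\circ\eta=\psi$. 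Third, lift the homotopy: since $f$ is an acyclic topological fibration and $(T\times\{0,1\})\cup(T_0\times I)\hookrightarrow T\times I$ is a topological cofibration, lift $H$ to a continuous map $\tilde H:T\times I\to S$ with $\tilde H_0=\varphi$, $\tilde H_1=\eta$, and $\tilde H_t|T_0=\varphi|T_0$ for all $t$. The resulting $\tilde H$ is the sought deformation of $\varphi$ rel $T_0$ to the holomorphic map $\eta$, so $S$ has BOPI.

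The main obstacle I foresee is orchestrating the two kinds of liftings so that they fit together: the homotopy lift $\tilde H$ in the third move is purely topological and would a priori end at an arbitrary continuous lift of $\psi$, so the second move is needed beforehand to produce a specific holomorphic lift $\eta$ with the correct boundary behaviour on $T_0$. Both moves rely on the acyclicity of $f$ (as a topological fibration), while the specifically holomorphic conclusion at time $1$ requires POPI of $f$; this is precisely where the hypothesis that $X$ is good is used in full strength.
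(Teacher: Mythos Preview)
Your proposal is correct and follows exactly the paper's three-step strategy: transfer BOP from $X$ to the Stein source $S$, upgrade BOP to POP on $S$ using that $S$ is Stein (\cite{Larusson2}, Theorem~2), then push POP forward to $X$ via Theorem~\ref{up-and-down}(2). The only difference is that the paper dispatches step~(i) by citing \cite{Larusson2}, Theorem~3 (the basic-Oka analogue of Theorem~\ref{up-and-down}), whereas you reprove the needed direction by hand; your three-move lifting argument is correct and is essentially the content of that cited result in this special case.
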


\begin{proof}  Let $S\to X$ be a good map from a Stein manifold $S$ to a complex manifold $X$.  If $X$ has the basic Oka property, then so does $S$ by \cite{Larusson2}, Theorem 3.  Since $S$ is Stein, $S$ is elliptic by \cite{Larusson2}, Theorem 2, so $S$ has the parametric Oka property.  By Theorem \ref{up-and-down}, it follows that $X$ has the parametric Oka property.
\end{proof}

\section{The convex interpolation property}

\noindent
Let us call a submanifold $T$ of $\C^n$ {\it special}\, if $T$ is the graph of a proper holomorphic embedding of a convex domain $\Omega$ in $\C^k$, $k\geq 1$, as a submanifold of $\C^{n-k}$, that is, 
$$T=\{(x,\varphi(x))\in\C^k\times\C^{n-k}:x\in\Omega\},$$ 
where $\varphi:\Omega\to\C^{n-k}$ is a proper holomorphic embedding.  We say that a complex manifold $X$ satisfies the {\it convex interpolation property} (CIP) if every holomorphic map to $X$ from a special submanifold $T$ of $\C^n$ extends holomorphically to $\C^n$, that is, the restriction map $\O(\C^n,X)\to\O(T,X)$ is surjective.

We say that $X$ satisfies the {\it parametric convex interpolation property} (PCIP) if whenever $T$ is a special submanifold of $\C^n$, the restriction map $\O(\C^n,X)\to\O(T,X)$ is an acyclic Serre fibration.  (Since $\C^n$ and $T$ are holomorphically contractible, acyclicity is automatic; it is the fibration property that is at issue.)  More explicitly, $X$ satisfies PCIP if whenever $T$ is a special submanifold of $\C^n$ and $Q\hookrightarrow P$ is an inclusion of parameter spaces, every continuous map $f:(\C^n\times Q)\cup(T\times P)\to X$, such that $f|\C^n\times Q$ is holomorphic along $\C^n$ and $f|T\times P\to X$ is holomorphic along $T$, extends to a continuous map $g:\C^n\times P\to X$ which is holomorphic along $\C^n$.  The parameter space inclusions $Q\hookrightarrow P$ may range over all cofibrations of topological spaces or, equivalently, over the generating cofibrations $\partial B_n\hookrightarrow B_n$, $n\geq 0$, where $B_n$ is the closed unit ball in $\R^n$ (we take $B_0$ to be a point and $\partial B_0$ to be empty).  Clearly, CIP is PCIP with $P$ a point and $Q$ empty.  

\begin{lemma}  A complex manifold has the parametric convex interpolation property if and only if it has the parametric Oka property with interpolation for every inclusion of a special submanifold into $\C^n$.
\label{lemma}
\end{lemma}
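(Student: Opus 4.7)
The plan is to prove the two implications separately: the forward direction is a straightforward consequence of POPI, while the reverse direction will require a topological homotopy argument in addition to PCIP.

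For the forward direction (POPI implies PCIP), I would begin with a continuous map $f : (\C^n \times Q) \cup (T \times P) \to X$ satisfying the hypotheses of PCIP. Since the inclusion $(\C^n \times Q) \cup (T \times P) \hookrightarrow \C^n \times P$ is a topological cofibration (it is the pushout-product of the closed cofibrations $T \hookrightarrow \C^n$ and $Q \hookrightarrow P$), $f$ extends continuously to a map $\tilde f : \C^n \times P \to X$. Then $\tilde f$ satisfies the hypotheses of POPI for the special inclusion $T \hookrightarrow \C^n$ with parameter cofibration $Q \hookrightarrow P$, so POPI produces a deformation $G$ whose endpoint $G(\cdot,\cdot,1)$ is holomorphic along $\C^n$ and, by property~(3), still restricts to $f$ on $(\C^n \times Q) \cup (T \times P)$. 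This endpoint is the required PCIP extension.

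For the reverse direction (PCIP implies POPI), starting from a continuous map $g : \C^n \times P \to X$ satisfying the POPI hypotheses, I would first apply PCIP to the restriction $g|_{(\C^n \times Q) \cup (T \times P)}$ with the same parameter cofibration to produce an extension $g_1 : \C^n \times P \to X$ that is holomorphic along $\C^n$ and agrees with $g$ on the subspace. It then remains to connect $g$ to $g_1$ by a continuous homotopy that is stationary on $(\C^n \times Q) \cup (T \times P)$; such a homotopy is exactly the POPI deformation required.

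The main obstacle is this final homotopy step, and here the special nature of $T$ enters. Since $T \cong \Omega$ and $\C^n$ are both contractible, the closed inclusion $T \hookrightarrow \C^n$ is a trivial cofibration of topological spaces. By the pushout-product axiom in the Hurewicz model structure on topological spaces, the inclusion $(\C^n \times Q) \cup (T \times P) \hookrightarrow \C^n \times P$ is then also a trivial cofibration. Consequently, the restriction map $X^{\C^n \times P} \to X^{(\C^n \times Q) \cup (T \times P)}$ is a Hurewicz fibration with contractible fibers, so the fiber over $g|_{(\C^n \times Q) \cup (T \times P)}$, which contains both $g$ and $g_1$, is path-connected; any path between them in this fiber supplies the desired homotopy.
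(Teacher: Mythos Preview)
Your argument is correct and follows essentially the same route as the paper: extend continuously, apply POPI to get PCIP; conversely, apply PCIP to the restriction and then homotope rel the subspace.  One small slip: in the forward direction you justify the continuous extension of $f$ by saying the inclusion $(\C^n\times Q)\cup(T\times P)\hookrightarrow\C^n\times P$ is a cofibration, but a cofibration alone does not guarantee extensions---you need it to be an \emph{acyclic} cofibration, exactly the fact you establish and use in the reverse direction.  The paper invokes this acyclicity up front in both halves, and for the final homotopy it phrases your ``contractible fibre'' argument as a direct extension along the acyclic cofibration
\[
\bigl(\bigl((\C^n\times Q)\cup(T\times P)\bigr)\times I\bigr)\cup\bigl(\C^n\times P\times\{0,1\}\bigr)\hookrightarrow \C^n\times P\times I,
\]
which is the adjoint formulation of the same fact.
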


\begin{proof}  Using the topological acyclic cofibration $(\C^n\times Q)\cup(T\times P)\hookrightarrow \C^n\times P$, we can extend a continuous map $f:(\C^n\times Q)\cup(T\times P)\to X$ as in the definition of PCIP to a continuous map $g:\C^n\times P\to X$.  POPI allows us to deform $g$ to a continuous map $h:\C^n\times P\to X$ which is holomorphic along $\C^n$, keeping the restriction to $(\C^n\times Q)\cup(T\times P)$ fixed, so $h$ extends $f$.

Conversely, if $h:\C^n\times P\to X$ is a continuous map such that $h|\C^n\times Q$ is holomorphic along $\C^n$ and $h|T\times P$ is holomorphic along $T$, and $g$ is an extension of $f=h|(\C^n\times Q)\cup(T\times P)$ provided by PCIP, then the topological acyclic cofibration
$$\big(\big((\C^n\times Q)\cup(T\times P)\big)\times I\big)\cup\big(\C^n\times P\times\{0,1\}\big)\hookrightarrow \C^n\times P\times I$$
provides a deformation of $h$ to $g$ which is constant on $(\C^n\times Q)\cup(T\times P)$. 
\end{proof}

\begin{theorem}  A complex manifold has the convex interpolation property if and only if it has the basic Oka property.  A complex manifold has the parametric interpolation property if and only if it has the parametric Oka property.
\label{cep}
\end{theorem}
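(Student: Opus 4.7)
The forward direction is an easy consequence of Forstneri\v c's equivalences of the Oka properties. BOP is equivalent to BOPI for manifolds, so to prove BOP $\Rightarrow$ CIP it suffices to observe that a special submanifold $T$ of $\C^n$ is a closed Stein submanifold (biholomorphic via the coordinate projection $\C^n=\C^k\times\C^{n-k}\to\C^k$ to the convex base $\Omega$), making $T\hookrightarrow\C^n$ a Stein inclusion. Given $f\in\O(T,X)$, the topological cofibrancy of $T\hookrightarrow\C^n$ together with the contractibility of both $T$ and $\C^n$ gives a continuous extension $\tilde f:\C^n\to X$; BOPI then deforms $\tilde f$ to a holomorphic map while keeping $f$ fixed on $T$, producing the required holomorphic extension. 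The parametric analogue POP $\Rightarrow$ PCIP follows from Forstneri\v c's equivalence POP $\Leftrightarrow$ POPI together with Lemma \ref{lemma}, which identifies PCIP with POPI for special submanifold inclusions, a particular case of POPI.

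The substantive content is the reverse direction. Following the strategy of Theorem 1 in \cite{Larusson2}, I would work in the intermediate model structure of \cite{Larusson1}, in which $X$ has POP precisely when $X$ is fibrant. Each special submanifold inclusion $T\hookrightarrow\C^n$ is an acyclic cofibration in this structure, being a Stein inclusion between contractible manifolds. By Lemma \ref{lemma}, PCIP says exactly that $X\to\ast$ has the right lifting property against this class of acyclic cofibrations. The task is thus to show that this class suffices to test fibrancy, i.e., that together they generate the acyclic cofibrations of the model structure. Given that, PCIP is equivalent to fibrancy and hence to POP, and the non-parametric statement CIP $\Leftrightarrow$ BOP follows by restricting to the one-point parameter space throughout (or by citing the basic counterpart of Lemma \ref{lemma} and Forstneri\v c's equivalence BOP $\Leftrightarrow$ BOPI).

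The main obstacle is the generating-set claim: special submanifolds form a very restricted class of Stein submanifolds of $\C^n$, so one must argue that every Stein inclusion $T\hookrightarrow S$ can, up to weak equivalence in the intermediate model structure, be built from them. The standard tools are a proper holomorphic embedding of $S$ as a closed submanifold of some $\C^N$ (Remmert-Bishop-Narasimhan) together with a holomorphic retraction from a Stein tubular neighbourhood of $S$ (Docquier-Grauert), which together let one replace the abstract inclusion $T\hookrightarrow S$ by a nested configuration inside $\C^N$ amenable to PCIP. Forstneri\v c's parametric Oka principle for liftings (Theorem \ref{POPL}) supplies the homotopical flexibility that makes this reduction work uniformly in the target $X$, without any assumption on $X$ beyond PCIP itself.
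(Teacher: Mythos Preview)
Your forward direction is fine and matches the paper: POPI (equivalently POP by Forstneri\v c) specialises to special inclusions, and Lemma~\ref{lemma} turns this into PCIP; the basic case follows the same way.

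The reverse direction, however, has a real gap. You propose to show that special inclusions $T\hookrightarrow\C^n$ generate the acyclic cofibrations of the intermediate model structure, so that right lifting against them forces fibrancy. This is a much stronger statement than what is needed, and you do not prove it: the embedding and tubular-neighbourhood tools you list let you realise a Stein inclusion inside some $\C^N$, but they do not by themselves reduce an arbitrary acyclic Stein inclusion to a composite of pushouts of special inclusions, nor do they handle the non-manifold acyclic cofibrations present in the simplicial model structure. Your appeal to Theorem~\ref{POPL} is also misplaced here: that result produces POPI for certain \emph{maps} (subelliptic submersions, fibre bundles with Oka fibre), whereas the target $X$ is arbitrary and no such structure is available.

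The paper's route is different and avoids all of this. It does not aim at POPI directly; instead it passes through \emph{approximation}. The proof of Theorem~1 in \cite{Larusson2} (the mapping-cylinder argument) shows that for a convex compact $K\subset\C^k$, POPA is implied by POPI for precisely the special inclusions $T\hookrightarrow\C^n$ built from convex neighbourhoods $\Omega$ of $K$. Hence, via Lemma~\ref{lemma}, PCIP gives PCAP, and then Forstneri\v c's equivalence PCAP $\Leftrightarrow$ POP (\cite{Forstneric1}, Theorem~6.1) closes the loop. The basic case is the same argument with trivial parameter spaces. So the missing idea in your sketch is this detour through PCAP: rather than trying to generate all acyclic cofibrations from special inclusions, one only needs to generate enough approximation data on convex sets, and Forstneri\v c's theorem does the rest.
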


\begin{proof}  We prove the equivalence of the parametric properties.  The equivalence of the basic properties can be obtained by restricting the argument to the case when $P$ is a point and $Q$ is empty.  By Lemma \ref{lemma}, POPI, which is one of the equivalent forms of the parametric Oka property by \cite{Forstneric1}, Theorem 6.1, implies PCIP.

By \cite{Larusson2}, Theorem 1, POPI implies POPA (not only for manifolds but also for maps).  The old version of POPA used in \cite{Larusson2} does not require the intermediate maps to be holomorphic on a neighbourhood of the holomorphically convex compact subset in question, only arbitrarily close to the initial map.  This property is easily seen to be equivalent to the current, ostensibly stronger version of POPA: see the comment preceding Lemma 5.1 in \cite{Forstneric1}.

The proof of Theorem 1 in \cite{Larusson2} shows that to prove POPA for $K$ convex in $S=\C^k$, that is, to prove PCAP, it suffices to have POPI for Stein inclusions $T\hookrightarrow \C^n$ associated to convex domains $\Omega$ in $\C^k$ as in the definition of a special submanifold.  Thus, by Lemma \ref{lemma}, PCIP implies PCAP, which is one of the equivalent forms of the parametric Oka property, again by \cite{Forstneric1}, Theorem 6.1.
\end{proof}

There are many alternative definitions of a submanifold of $\C^n$ being special for which Theorem \ref{cep} still holds.  For example, we could define special to mean topologically contractible: this is the weakest definition that obviously works.  We could also define a submanifold of $\C^n$ to be special if it is biholomorphic to a bounded convex domain in $\C^k$, $k<n$.  On the other hand, for the proof of Theorem \ref{cep} to go through, the class of special manifolds must contain $T$ associated as above to every element $\Omega$ in some basis of convex open neighbourhoods of every convex compact subset $K$ of $\C^n$ for every $n\geq 1$, such that $K$ is of the kind termed {\it special}\, by Forstneri\v c (see \cite{Forstneric2}, Section 1).

\end{document}